\documentclass[a4paper,11pt]{amsart}
\usepackage{graphicx}
\usepackage{amsmath}
\usepackage{amssymb}
\usepackage{oldgerm}
\usepackage[all]{xy}
\usepackage{tikz}
\usepackage{tikz-cd}
\usepackage{comment}
\usepackage{mathtools}
\usepackage{textcomp}
\usepackage{hyperref}
\usepackage[shortlabels]{enumitem}

%\usepackage[11pt]{moresize}
%\usetikzlibrary{matrix,arrows}

\newtheorem{Theorem}{Theorem}[section]
%\numberwithin{Theorem}{subsection} 
\newtheorem{Proposition}[Theorem]{Proposition}
\newtheorem{lemm}[Theorem]{Lemma}
\newtheorem{Corollary}[Theorem]{Corollary}
\theoremstyle{definition}

\newtheorem{Remark}[Theorem]{Remark}

\newcommand{\iso}{\cong}
\newcommand{\an}{a_0 \otimes \cdots \otimes a_n}

\newcommand{\Hom}{\mathrm{Hom}}
\newcommand{\bcirc}{\,\bar{\circ}\,}

\newcommand{\bs}{\text{\textbackslash}}

\title[Batalin-Vilkovisky structure]{Batalin-Vilkovisky structure on Hochschild cohomology with coefficients in the dual algebra}

\author{Marco Armenta}
% \address{CIMAT A. C., Guanajuato, M\'exico.\\ IMAG, Univ Montpellier, CNRS, Montpellier, France.}
% \email{drmarco@cimat.mx}

\author{Samuel Leblanc}

\keywords{Batalin-Vilkovisky algebras, Hochschild cohomology}
\date{}

\AtEndDocument{\bigskip{\small%
  \textsc{Institut quantique, Universit\'e de Sherbrooke\\ 
  Sherbrooke, QC J1K 2R1, Canada} \par  
  \textit{E-mail address}: 
  \texttt{Marco.Armenta@USherbrooke.ca} \par
  \addvspace{\medskipamount}
  \textsc{D\'epartement de math\'ematiques, Universit\'e de Sherbrooke\\ Sherbrooke, QC J1K 2R1, Canada} \par  
  \textit{E-mail address}: 
  \texttt{Samuel.Leblanc6@USherbrooke.ca}
}}

\begin{document}
\maketitle

\baselineskip=15pt

\begin{abstract}
We prove that Hochschild cohomology with coefficients in $A^*=\Hom_k(A,k)$ under conditions on the algebra structure of $A^*$ is a Batalin-Vilkovisky algebra. 
We also show that for symmetric and Frobenius algebras, this recovers the known BV-structures in Hochschild cohomology with coefficients in $A$ but admits an easy-to-describe BV-operator. 
Finally, we show that for monomial algebras $A = kQ/\langle T \rangle$, the Hochschild cohomology with coefficients in $A^*$ is always a Batalin-Vilkovisky algebra. 
\end{abstract}

\section{Introduction}

Let $A$ be an associative unital algebra projective over a commutative associative unital ring $k$. 
The Hochschild cohomology $k$-modules of $A$ with coefficients in an $A$-bimodule $M$,
\[
H^\bullet(A,M)=\bigoplus_{n\geq0} H^n(A,M)
\]
have been introduced by Hochschild \cite{Hochschild} and extensively studied since then. 
Operations on cohomology have been defined, such as the cup product and the Gerstenhaber bracket, making it into a Gerstenhaber algebra \cite{Gerstenhaber}. 
Tradler showed \cite{Tradler} that for symmetric algebras this Gerstenhaber algebra structure on cohomology comes from a Batalin-Vilkovisky operator (BV-operator) and Menichi extended the result \cite{Menichi}. 
As Tradler mentions, it is important to determine other families of algebras where this property holds. 
Lambre-Zhou-Zimmermann proved that this is the case for Frobenius algebras with semisimple Nakayama automorphism \cite{Lambre}. 
Independently, Volkov proved with other methods that this holds for Frobenius algebras in which the Nakayama automorphism has finite order and the characteristic of the field $k$ does not divide it \cite{Volkov}. 
It has also been shown that Calabi-Yau algebras admit the existence of a BV-operator \cite{Ginzburg}, \cite{Kowalzig}, and that this BV-structure on its cohomology is isomorphic to the one of the cohomology of the Koszul dual, for a Koszul Calabi-Yau algebra \cite{Chen}. 
More generally, for algebras with duality, see \cite{Lambre}, a BV-structure is equivalent to a Tamarkin-Tsygan calculus or a differential calculus \cite{Lambre}. 
The proofs of \cite{Ginzburg}, \cite{Lambre} and \cite{Tradler} have in common the use of Connes' differential \cite{Connes} on homology to define the BV-operator on cohomology. 
More recently, efforts have been put into explicitly describing the BV-structure on the Hochschild cohomology ring of different classes of algebras, for instance group algebras \cite{LiuZhou}, exterior algebras \cite{Lu}, skew Calabi-Yau generalized Weyl algebras \cite{LiuMa}, self-injective quadratic monomial algebras \cite{GaoHou}, and self-injective Nakayama algebras \cite{Itagaki}.

We start by giving an interpretation of Connes' differential in Hochschild cohomology with coefficients in the $A$-bimodule $A^*=\Hom_k(A,k)$. 
When $A^*$ is also an associative $k$-algebra for which the multiplication verifies a few conditions, we can define a graded commutative cup product on $H^\bullet(A,A^*)$. 
In this case, we show in Theorem \ref{thm:BVdual} that we automatically get a BV-algebra structure on $H^\bullet(A,A^*)$ with an easy-to-describe BV-operator. 
Given a morphism of $A$-bimodules $\psi : A^* \to A$ again satisfying a few conditions, we can define a bracket analogous to the usual Gerstenhaber bracket \cite{Gerstenhaber}, and we extend the aforementioned  theorem by proving, in Theorem \ref{thm:BVDualPsi}, that $H^\bullet(A,A^*)$ is a BV-algebra with this bracket. 
Afterwards, we show that this applies to symmetric, and more generally, Frobenius algebras (Corollary \ref{cor:symm} and Theorem \ref{thm:Frobenius}). 
More precisely, the BV-algebras obtained are isomorphic to the known BV-algebras with coefficients in $A$ (see \cite{Lambre}, \cite{Volkov}). 
In those cases, we are able to give an explicit formulation of the BV-operator as opposed to the ones on $HH^\bullet(A)$. 
But, by doing so, we lose some simplicity in the description of the cup product. 
As a direct corollary, one sees that the BV-operators defined in \cite{Lambre} and \cite{Volkov} are actually equal. 
Finally, we show that Theorem \ref{thm:BVdual} applies when $A = kQ/\langle T\rangle$ is a monomial path algebra, which implies that $H^\bullet(A,A^*)$ is always a BV-algebra. 
In general, it's not the case that $HH^\bullet(A)$ admits a BV-structure if one considers the usual Gerstenhaber bracket. 
Furthermore, as opposed to with symmetric and Frobenius algebras, we are able to give an explicit description of the cup product. 

\section{Connes' differential}\label{section:connes}

\textit{Connes' differential} (\cite{Connes}) is the map $B:HH_n(A) \to HH_{n+1}(A)$ that makes the Hochschild theory of an algebra into a differential calculus \cite{Tamarkin}. In terms of the bar resolution, it is given by
\[
  	  B([\an]) = \left[ \sum_{i=0}^n (-1)^{ni} 1 \otimes a_i \otimes \cdots \otimes a_n \otimes a_0 \otimes \cdots \otimes a_{i-1} \right].
\]
For an $A$-bimodule $M$ the \textit{dual} $A$-bimodule is denoted $M^*=\Hom_k(M,k)$. We consider the canonical $A$-bimodule structure on $M^*$, that is $(afb)(x)=f(bxa)$ for all $a,b\in A$, all $f\in M^*$ and all $x\in M$. Let 
\[
\bar{B}: H^{n+1}(A,A^*) \to H^{n}(A,A^*)
\]
be given by 
\[
    \bar{B}([f])(a_1 \otimes \cdots \otimes a_n)(a_0) := \sum_{i=0}^{n} (-1)^{ni} f(a_i \otimes \cdots \otimes a_n \otimes a_0 \otimes \cdots \otimes a_{i-1}) (1). 
\]
It is straightforward to verify that it is well-defined. 
Let 
\[
\mathfrak{C}: H^n(A,M^*) \to  H_n(A,M)^*
\]
be the morphism
\[
    \mathfrak{C}([f])([x \otimes a_1 \otimes \cdots \otimes a_n])=f(a_1 \otimes \cdots \otimes a_n)(x),
\]
for all $a_i \in A$, for $i=1,\cdots,n$, all $x\in M$ and all $[f]\in H^{n}(A,M^*)$, see \cite{Cartan} (p. 181, Exercise 8). The evaluation map $ev:H_n(A,M) \to H_n(A,M)^{**}$ can be composed with the $k$-dual of $\mathfrak{C}$ to get a morphism 
\[
\varphi: H_n(A,M) \to H^n(A,M^*)^*
\]
which is given by 
\[
	\varphi([x \otimes a_1 \otimes \cdots \otimes a_n])([f]) = f(a_1 \otimes \cdots \otimes a_n)(x).
\]
For $M=A$ we obtain a morphism $\varphi: HH_n(A) \to H^n(A,A^*)^*$. 
%The proof of the following lemma is straightforward.
% proposition5
\begin{lemm}\label{lemm:varphi}
    Let $k$ be a commutative associative unital ring and let $A$ be an associative and unital $k$-algebra. The following diagram is commutative
    \[
	\begin{tikzpicture}
         	 \matrix (m) [matrix of math nodes,row sep=2em,column sep=2em]
          	{
           	  	HH_n(A) & HH_{n+1}(A) \\
           	  	H^n(A,A^*)^* & H^{n+1}(A,A^*)^*.\\
           	 };
          	\path[-stealth]
           	 (m-1-1) edge node [above] {$B$} (m-1-2)
           	 (m-2-1) edge node [above] {$\bar{B}^*$} (m-2-2)
            
           	 (m-1-1) edge node [left] {$\varphi$} (m-2-1)
           	 (m-1-2) edge node [right] {$\varphi$} (m-2-2);        
	\end{tikzpicture}
	\]
	If $k$ is a field then $\varphi$ is a monomorphism. If $k$ is a field and $HH_n(A)$ is finite-dimensional then $\varphi: HH_n(A) \to H^n(A,A^*)^*$ is an isomorphism.
\end{lemm}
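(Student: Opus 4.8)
The plan is to treat the three assertions separately: the commutativity of the square first, and then the injectivity and bijectivity of $\varphi$ over a field.

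\emph{Commutativity.} Since $B$, $\bar{B}$ and $\varphi$ are all given by explicit formulas on representatives, I would simply evaluate both composites of the square on a class $\xi=[a_0\otimes\cdots\otimes a_n]\in HH_n(A)$ against an arbitrary class $[f]\in H^{n+1}(A,A^*)$ and compare. Expanding $B$ and applying $\varphi$ term by term gives
\[
\varphi(B\xi)([f])=\sum_{i=0}^{n}(-1)^{ni}\,f(a_i\otimes\cdots\otimes a_n\otimes a_0\otimes\cdots\otimes a_{i-1})(1),
\]
the factor $1$ appearing because each cyclic rotation produced by $B$ has leading tensor factor $1$. On the other hand $(\bar{B}^*\varphi(\xi))([f])=\varphi(\xi)(\bar{B}[f])=\bar{B}([f])(a_1\otimes\cdots\otimes a_n)(a_0)$, and the defining formula for $\bar{B}$ yields exactly the same sum. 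Thus the square commutes; in fact $\bar{B}$ is, by construction, the transpose of $B$ under the pairing encoded by $\varphi$, so the identity is forced. The only point to record is that each map is well defined on (co)homology, which is the straightforward verification already alluded to.

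\emph{The two field statements.} Here I would exploit the factorization $\varphi=\mathfrak{C}^*\circ ev$ noted above, where $ev\colon HH_n(A)\to HH_n(A)^{**}$ is evaluation. The key structural input is that, via the tensor-hom adjunction, the Hochschild cochain complex computing $H^\bullet(A,A^*)$ is isomorphic to the $k$-linear dual of the Hochschild chain complex computing $HH_\bullet(A)$: indeed $Hom_k(A^{\otimes n},Hom_k(A,k))\cong Hom_k(A^{\otimes(n+1)},k)$, and with the bimodule structure $(afb)(x)=f(bxa)$ on $A^*$ this adjunction intertwines the Hochschild coboundary with the $k$-dual of the Hochschild boundary (Cartan-Eilenberg \cite{Cartan}). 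Under this identification $\mathfrak{C}$ is precisely the natural comparison map $H^n(C^*_\bullet)\to H_n(C_\bullet)^*$.

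Over a field the functor $Hom_k(-,k)$ is exact, so this comparison map is an isomorphism (the universal coefficient theorem over a field); hence $\mathfrak{C}$, and therefore its $k$-dual $\mathfrak{C}^*$, is an isomorphism. Since the evaluation map $ev$ is injective for every vector space, $\varphi=\mathfrak{C}^*\circ ev$ is a monomorphism, which is the second assertion. If in addition $HH_n(A)$ is finite dimensional then $ev$ is an isomorphism, and composing two isomorphisms shows that $\varphi$ is an isomorphism. The field-theoretic conclusions are thus immediate once the structural input is in place; the one step requiring genuine care is that chain-level claim, namely that the adjunction isomorphism is an isomorphism of complexes identifying $\mathfrak{C}$ with the universal coefficient map, i.e.\ the sign bookkeeping relating the Hochschild coboundary on $C^\bullet(A,A^*)$ to the dual of the Hochschild boundary on $C_\bullet(A,A)$. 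Everything after that is formal.
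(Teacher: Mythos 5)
Your proof is correct and follows essentially the same route as the paper's: a direct term-by-term evaluation of both composites against an arbitrary $[f]\in H^{n+1}(A,A^*)$ for the commutativity, and then the factorization $\varphi=\mathfrak{C}^*\circ ev$ combined with the isomorphy of $\mathfrak{C}$ over a field (which the paper simply cites from Cartan--Eilenberg, whereas you unpack it as a universal coefficient argument) and the standard injectivity/bijectivity properties of the evaluation map. The extra chain-level justification you supply for $\mathfrak{C}$ is a welcome elaboration, not a deviation.
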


\begin{proof}
Let $[\an] \in HH_n(A)$. For $[f] \in H^{n+1}(A,A^*)$ we have
\begin{align*}
	\varphi(B([&a_0 \otimes \cdots \otimes a_n])) ([f]) \\
	% = \varphi \left( \left[  \displaystyle\sum_{i=0}^n (-1)^{in} 1 \otimes a_i \otimes \cdots \otimes a_n \otimes a_0 \otimes \cdots \otimes a_{i-1} \right] \right) ([f]) \\
	 &=  \displaystyle\sum_{i=0}^n (-1)^{in} \varphi ( 1 \otimes a_i \otimes \cdots \otimes a_n \otimes a_0 \otimes \cdots \otimes a_{i-1} ) ([f]) \\
	 &=  \displaystyle\sum_{i=0}^n (-1)^{in} f(a_i \otimes \cdots \otimes a_n \otimes a_0 \otimes \cdots \otimes a_{i-1} )(1) \\
	 &=  \bar{B}([f])(a_1 \otimes \cdots \otimes a_n)(a_0) \\
	 &=   \varphi([\an])(\bar{B}([f]))\\
	 &=  \bar{B}^* ( \varphi([a_0 \otimes \cdots a_n]) )([f]).
\end{align*}
If $k$ is a field, then the evaluation map is a monomorphism and $\mathfrak{C}$ is an isomorphism \cite{Cartan}, hence $\varphi$ is a monomorphism. If in addition $HH_n(A)$ is finite dimensional over the field $k$, the evaluation map on $HH_n(A)$ is an isomorphism and then so is $\varphi$.
\end{proof}

\section{Batalin-Vilkovisky structure}

For the rest of this paper, we work over a field $k$. 
A \textit{Gerstenhaber algebra} is a triple $\left( \mathcal{H}^\bullet, \cup, [\ ,\ ] \right)$ such that $\mathcal{H}^\bullet$ is a graded $k$-module, $\cup:\mathcal{H}^n \times \mathcal{H}^m \to \mathcal{H}^{n+m}$ is a graded commutative associative product and $[\ ,\ ]:\mathcal{H}^n \times \mathcal{H}^m \to \mathcal{H}^{n+m-1}$ is a graded Lie bracket such that it is anti-symmetric $[f,g] = -(-1)^{(|f|-1)(|g|-1)} [g,f]$, it satisfies the Jacobi identity
    \[
    [f,[g,h]] = [[f,g],h] + (-1)^{(|f|-1)(|g|-1)} [g,[f,h]]
    \]
as well as the Poisson identity
    \[
     [f,g \cup h] = [f,g] \cup h + (-1)^{(|f|-1)|g|} g \cup [f,h],
    \]
for all homogeneous elements $f,g,h$ of $\mathcal{H}^\bullet$. We denote by $|f|$ the degree of a homogeneous element $f\in\mathcal{H}^\bullet$.
A \textit{Batalin-Vilkovisky algebra} (\textit{BV-algebra}) is a Gerstenhaber algebra $(\mathcal{H}^*,\cup,[\ ,\ ])$ together with a morphism 
\[
\Delta:\mathcal{H}^{\bullet+1} \to \mathcal{H}^\bullet
\]
such that $\Delta^2 = 0$ and
\begin{equation}\label{eq:BVOp}
[f,g] = (-1)^{|f|} \big( \Delta(f \cup g) - \Delta(f) \cup g - (-1)^{|f|} f \cup \Delta(g) \big).
\end{equation}

When $M = A$, the Hochschild cohomology group $HH^\bullet(A)$ admits a graded commutative cup product sending $f$ and $g$ to the map given by 
\[
f\cup g(a_{1} \otimes \cdots \otimes a_{n+m}) = f(a_{1}\otimes \cdots \otimes a_n)g(a_{n+1}\otimes \cdots \otimes a_m).
\] 
More generally, if $M$ is an algebra, we can define a cup product on $H^\bullet(A,M)$ analogously. 
Under the hypotheses stated in the following proposition, this cup product is also graded commutative. 

\begin{Proposition}\label{prp:multiplicationCup}
If $M$ is an $A$-bimodule that is also an associative $k$-algebra for which
\begin{itemize}
    \item $a(mm') = (am)m'$,
    \item $(mm')a = m(m'a)$,
    \item $(ma)m' = m(am')$
\end{itemize}
hold for all $m,m' \in M$ and $a \in A$, then the cup product on $H^\bullet(A,M)$ is graded commutative.
\end{Proposition}

\begin{proof}
This is obtained from similar computations as in \cite{Gerstenhaber}, see Remark 1.3.5 in \cite{Witherspoon}.
\end{proof}

In the case $M=A^*$, we automatically get a BV-algebra. 

\begin{Theorem}\label{thm:BVdual}
    Let $k$ be a field and let $A$ be a $k$-algebra. 
    If the $A$-bimodule $A^*$ admits an associative $k$-algebra structure on which the cup product on $H^\bullet(A,A^*)$ is graded commutative, then $H^\bullet(A,A^*)$ is a BV-algebra with BV-operator $\bar{B}$. 
\end{Theorem}

\begin{proof}
    Since the following diagram is commutative
     \[
	\begin{tikzpicture}
         	 \matrix (m) [matrix of math nodes,row sep=2em,column sep=2em]
          	{
           	  	HH_n(A) & HH_{n+1}(A) & HH_{n+2}(A)\\
           	  	H^n(A,A^*)^* & H^{n+1}(A,A^*)^* & H^{n+2}(A,A^*)^*\\
           	 };
          	\path[-stealth]
           	 (m-1-1) edge node [above] {$B$} (m-1-2)
           	 (m-1-2) edge node [above] {$B$} (m-1-3)
           	 
           	 (m-2-1) edge node [above] {$\bar{B}^*$} (m-2-2)
           	 (m-2-2) edge node [above] {$\bar{B}^*$} (m-2-3)
           	 
           	 (m-1-1) edge node [left] {$\varphi$} (m-2-1)
           	 (m-1-2) edge node [left] {$\varphi$} (m-2-2)
           	 (m-1-3) edge node [right] {$\varphi$} (m-2-3);        
	\end{tikzpicture}
	\]
    and by Lemma \ref{lemm:varphi}, $\varphi$ is a monomorphism, we have that $\bar{B}^2=0$. 
    Write $\cup_*$ the cup product on $H^\bullet(A,A^*)$ and define
    \[
    [f,g]_* := (-1)^{|f|} \big( \bar{B}(f \cup_* g) - \bar{B}(f) \cup_* g - (-1)^{|f|} f \cup_* \bar{B}(g)  \big).
    \]
    Since $\cup_*$ is graded commutative, the data $(H^\bullet(A,A^*), \cup_*, [\, , \,]_*, \bar{B})$ is a BV-algebra.
\end{proof}

Usually, on $HH^\bullet(A)$, one defines the bracket by
\[
[f, g] := f \bcirc g - (-1)^{(n-1)(m-1)}g\bcirc f 
\]
where, for $[f] \in HH^n(A)$ and $[g] \in HH^m(A)$, 
\begin{align*}
&f \bcirc g(a_1 \otimes \cdots \otimes a_{n+m-1}) := \\
&\sum_{i=1}^{n}(-1)^{t_i}f(a_1 \otimes \cdots \otimes a_{i-1} \otimes g(a_i \otimes \cdots \otimes a_{i+m-1}) \otimes a_{i+m} \otimes \cdots \otimes a_{n+m-1})
\end{align*}
with $t_i = (i-1)(m-1)$. 
Then, the relation given in equation \ref{eq:BVOp} becomes something to prove, not a definition as was done in the proof of the previous theorem. 
However, since we are interested in the Hochschild cohomology of $A$ with coefficients in $A^*$, we can't use the above definition of $[\,,\,]$. 
Suppose that the $A$-bimodule $A^*$ is an associative $k$-algebra. 
Following Section 9 of \cite{Gerstenhaber}, given a morphism of $A$-bimodules $\psi : A^* \to A$ such that 
\begin{equation}\label{eq:conditionPsi}
\psi(u)v = uv = u\psi(v)
\end{equation}
for all $u,v \in A^*$, we define 
\begin{equation}\label{eq:bracketPsi}
[f, g]_\psi := f \bcirc (\psi\circ g) - (-1)^{(n-1)(m-1)}g\bcirc (\psi \circ f). 
\end{equation}

\begin{Theorem}\label{thm:BVDualPsi}
    Let $k$ be a field and let $A$ be a $k$-algebra. 
    If the $A$-bimodule $A^*$ is an associative $k$-algebra where the multiplication respects equation \ref{eq:conditionPsi} for a morphism of $A$-bimodules $\psi : A^* \to A$, then $H^\bullet(A,A^*)$ is a BV-algebra with bracket $[\,,\,]_\psi$ and BV-operator $\bar{B}$. 
\end{Theorem}

\begin{proof}
If the multiplication in $A^*$ satisfies equation \ref{eq:conditionPsi}, then the conditions of Proposition \ref{prp:multiplicationCup} are respected, which implies that the cup product is graded commutative. 
The rest of the proof is a direct adaptation of the proof of Theorem 1 in \cite{Tradler} with $\Delta = \bar{B}$.
\end{proof}

\section{Frobenius and Symmetric algebras}
Assume that $A$ is a \textit{symmetric algebra}, i.e. a finite-dimensional $k$-algebra with a symmetric, associative, and non-degenerate bilinear form $\langle -, -\rangle: A\times A \to k$, where associative means
\[
\langle ab,c \rangle=\langle a,bc \rangle
\]
for all $a,b,c\in A$. 
The bilinear form defines an isomorphism of $A$-bimodules $Z:A \to A^*$ given by $Z(a)=\langle a,- \rangle$. 
It is shown in \cite{Tradler}, Theorem 1, that this defines a BV-operator on Hochschild cohomology, where $\Delta f$ is defined such that for $[f] \in HH^n(A)$ we have
    \[
        \langle\Delta f (a_1 \otimes \cdots \otimes a_{n-1}),a_n\rangle = \sum_{i=1}^n (-1)^{i(n-1)}\langle f(a_i \otimes \cdots a_n \otimes a_1 \cdots \otimes a_{i-1}),1\rangle .
    \]

Furthermore, the isomorphism $Z$ endows $A^*$ with an associative algebra structure, with multiplication 
\begin{equation}\label{eq:multSymmetric}
f\cdot g = Z(Z^{-1}(f)Z^{-1}(g)) = Z^{-1}(f)g = fZ^{-1}(g).
\end{equation}
The last two equalities are because
\[
Z(ab) = \langle ab, -\rangle = \langle a, b-\rangle = \langle a, - \rangle b = Z(a)b
\]
and 
\[
Z(ab) = \langle ab, -\rangle = \langle -, ab\rangle = \langle -a, b \rangle = \langle b , -a\rangle = a\langle b , -\rangle = aZ(b).
\]
Since $Z^{-1}$ is a morphism of $A$-bimodules, we can apply Theorem \ref{thm:BVDualPsi} to obtain that the cup product $\cup_*$ is graded commutative, that we can define the bracket $[\,,\,]_* := [\,,\,]_{Z^{-1}}$, and that $H^\bullet(A,A^*)$ is a BV-algebra. 
Moreover:

\begin{Corollary}\label{cor:symm}
    If $A$ is a symmetric algebra, then the BV-algebras $HH^\bullet(A)$ and $H^\bullet(A,A^*)$ are isomorphic.
\end{Corollary}

\begin{proof}
    Let $Z:A \to A^*$ be the isomorphism of $A$-bimodules given by the bilinear form of $A$. 
    We will denote $Z_*:HH^\bullet(A) \to H^\bullet(A,A^*)$ the isomorphism induced by composition with $Z$. Then the following diagram is commutative
     \[
	\begin{tikzpicture}
         	 \matrix (m) [matrix of math nodes,row sep=2em,column sep=2em]
          	{
           	  	HH^n(A) & HH^{n-1}(A)\\
           	  	H^n(A,A^*) & H^{n-1}(A,A^*).\\
           	 };
          	\path[-stealth]
           	 (m-1-1) edge node [above] {$\Delta$} (m-1-2)
           	 (m-2-1) edge node [above] {$\bar{B}$} (m-2-2)
            
           	 (m-1-1) edge node [left] {$Z_*$} (m-2-1)
           	 (m-1-2) edge node [right] {$Z_*$} (m-2-2);        
	\end{tikzpicture}
	\]
	Indeed, 
	\[
	\begin{array}{l}
	     (\bar{B} \circ Z_*)([f]) (a_1 \otimes \cdots \otimes a_{n-1})(a_0) \\ 
	      =  \bar{B}(Z_* \circ f)(a_1 \otimes \cdots \otimes a_{n-1})(a_0)  \\
	      =  \sum_{i=0}^{n-1} (-1)^{(n-1)i} (Z_*\circ f)(a_i \otimes \cdots \otimes a_{n-1} \otimes a_0 \otimes \cdots \otimes a_{i-1}) (1) \\
        = \sum_{i=0}^{n-1} (-1)^{(n-1)i} \langle f(a_i\otimes \cdots \otimes a_{n-1}\otimes a_0 \otimes \cdots \otimes a_{i-1}), 1\rangle\\
        = \langle \Delta f(a_1 \otimes \cdots \otimes a_{n-1}), a_0\rangle\\
	      =  (Z_* \circ \Delta)([f]) (a_1 \otimes \cdots \otimes a_{n-1})(a_0).
	\end{array}
	\]
 Even more, there are commutative diagrams where the vertical maps are isomorphisms
	\[
	\begin{tikzpicture}
         	 \matrix (m) [matrix of math nodes,row sep=2em,column sep=2em]
          	{
           	  	HH^n(A) \times HH^m(A) & HH^{n+m}(A)\\
           	  		H^n(A,A^*) \times H^m(A,A^*) & H^{n+m}(A,A^*)\\
           	 };
          	\path[-stealth]
           	 (m-1-1) edge node [above] {$\cup$} (m-1-2)
           	 (m-2-1) edge node [above] {$\cup_*$} (m-2-2)
            
           	 (m-1-1) edge node [left] {$Z_* \times Z_*$} (m-2-1)
           	 (m-1-2) edge node [right] {$Z_*$} (m-2-2);        
	\end{tikzpicture}
	\]
and
	\[
	\begin{tikzpicture}
         	 \matrix (m) [matrix of math nodes,row sep=2em,column sep=2em]
          	{
           	  	HH^n(A) \times HH^m(A) & HH^{n+m-1}(A)\\
           	  		H^n(A,A^*) \times H^m(A,A^*) & H^{n+m-1}(A,A^*).\\
           	 };
          	\path[-stealth]
           	 (m-1-1) edge node [above] {$[\ ,\ ]$} (m-1-2)
           	 (m-2-1) edge node [above] {$[\ ,\ ]_*$} (m-2-2)
            
           	 (m-1-1) edge node [left] {$Z_* \times Z_*$} (m-2-1)
           	 (m-1-2) edge node [right] {$Z_*$} (m-2-2);        
	\end{tikzpicture}
	\]
	Indeed,
\begin{align*}
	Z_*(f&) \cup_* Z_*(g)(a_1\otimes \cdots \otimes a_{n+m})\\
	 &= Z\Big(Z^{-1}\big(Z(f(a_1\otimes \cdots \otimes a_n))\big)Z^{-1}\big(Z(g(a_{n+1}\otimes \cdots \otimes a_{n+m}))\big)\Big)\\
	 &= Z(f(a_1\otimes \cdots \otimes a_n)g(a_{n+1}\otimes \cdots \otimes a_{n+m}))\\
	 &= Z_*(f\cup g)(a_1\otimes \cdots \otimes a_{n+m})
\end{align*}
	and
\begin{align*}
	    [Z_*f,&Z_*g]_* \\
	    &= (-1)^{|f|} \big( \bar{B}\big( Z_*f \cup_* Z_*g \big) - \bar{B}(Z_*f)\cup_* Z_*g - (-1)^{|f|}Z_*f \cup_* \bar{B}(Z_*g) \big)  \\
	    &= (-1)^{|f|} \big( \bar{B}\big( Z_* (f \cup g) \big) - Z_*(\Delta f) \cup_* Z_*g - (-1)^{|f|} Z_*f \cup_* Z_* (\Delta g)\big) \\
	    &= (-1)^{|f|} \big( Z_* \Delta(f \cup g)  - Z_*(\Delta f \cup g) - (-1)^{|f|} Z_*(f \cup \Delta g) \big) \\
	    &= (-1)^{|f|} Z_* \big( \Delta(f \cup g)  - (\Delta f \cup g) - (-1)^{|f|} (f \cup \Delta g) \big) \\
	    &= Z_* [f,g].
\end{align*}
	Commutativity of these diagrams implies that the $BV$-algebras $HH^\bullet(A)$ and $H^\bullet(A,A^*)$ are isomorphic.
\end{proof}

\begin{Remark}
Observe that choosing $\Delta:=(Z_*)^{-1}  \bar{B}  Z_* $ gives $HH^\bullet(A)$ the structure of a BV-algebra.
\end{Remark} 

Assume now that $A$ is a \textit{Frobenius algebra}, i.e. a finite dimensional $k$-algebra with a non-degenerate associative bilinear form $\langle -,-\rangle:A \times A \to k$. For every $a\in A$ there exists a unique $\mathfrak{N}(a) \in A$ such that $\langle a,-\rangle = \langle -, \mathfrak{N}(a)\rangle$. The map $\mathfrak{N}:A \to A$ turns out to be an algebra isomorphism and is called the \textit{Nakayama automorphism} of the Frobenius algebra $A$. 
Therefore, a symmetric algebra is a Frobenius algebra with Nakayama automorphism $\mathfrak{N} = \mathrm{id}$. 
The next result is a generalization of the previous corollary. 
Following \cite{Lambre} we consider the $A$-bimodule $A_\mathfrak{N}$ whose underlying $k$-module is $A$ and the corresponding actions are
\[
    a x b = a x \mathfrak{N} (b).
\]
Hence, the morphism $\xi: A_\mathfrak{N} \to A^*$ given by $\xi(a) = \langle -, a\rangle$ is an isomorphism of $A$-bimodules. 
By composing the evaluation map $A \to A^{**}$ with the transpose of $\xi$, we obtain an isomorphism of $A$-bimodules $Z: A \to A_\mathfrak{N}^*$ given by $Z(a) = \langle a, - \rangle$. 
Here, $A_\mathfrak{N}^*$ is endowed with the canonical bimodule structure $(afb)(c) = f(bc\mathfrak{N}(a))$, for $a,b \in A$, $c \in A_\mathfrak{N}$, and $f \in A_\mathfrak{N}^*$. 
The bimodule $A_\mathfrak{N}^*$ is an associative $k$-algebra with multiplication given by the same formula as for symmetric algebras (equation \ref{eq:multSymmetric}). 
To now verify the equalities given in the equation, we again have
\[
Z(ab) = \langle ab, -\rangle = \langle a, b-\rangle = \langle a, - \rangle b = Z(a)b,
\]
but for the second one, we have
\[
Z(ab) \!=\! \langle ab, -\rangle \!=\! \langle -, \mathfrak{N}(ab)\rangle \!=\! \langle -\mathfrak{N}(a), \mathfrak{N}(b) \rangle \!=\! \langle b , -\mathfrak{N}(a)\rangle \!=\! a\langle b , -\rangle \!=\! aZ(b).
\]

Since $Z^{-1}$ is a morphism of $A$-bimodules and by equation \ref{eq:multSymmetric}, we get a graded commutative cup product $\cup_*$ and we can define the bracket $[\,,\,]_* := [\,,\,]_{Z^{-1}}$. 
Therefore, $H^\bullet(A, A_\mathfrak{N}^*)$ is a Gerstenhaber algebra. 
However, we can't directly apply Theorem \ref{thm:BVDualPsi}, since the morphism $Z^{-1}$ has domain $A_\mathfrak{N}^*$, not $A^*$. 
With hypotheses on $\mathfrak{N}$, we can find a BV-operator on $H^\bullet(A, A_\mathfrak{N}^*)$ that turns it into a BV-algebra. 
Under the hypothesis of the following theorem, this BV-algebra is isomorphic to the construction from \cite{Lambre}. 

\begin{Theorem}\label{thm:Frobenius}
Let $A$ be a Frobenius algebra with semisimple Nakayama automorphism $\mathfrak{N}$. Then, the BV-algebras $HH^\bullet(A)$ and $H^\bullet(A,A_\mathfrak{N}^*)$ are isomorphic.
\end{Theorem}

\begin{proof}
It has been shown in \cite{Lambre}, Theorem 4.1, that when $\mathfrak{N}$ is semisimple, i.e. diagonalizable over the algebraic closure of $k$, then $HH^\bullet(A)$ is a BV-algebra. 
The BV-operator is $\partial B_\mathfrak{N}^* \partial^{-1}$, where $B_\mathfrak{N} : H_{\bullet}(A, A_\mathfrak{N}) \to H_{\bullet+1}(A, A_\mathfrak{N})$ sends $[a_0\otimes a_1\otimes \cdots \otimes a_n]$ to 
\[
\left[
\sum_{i=0}^n(-1)^{in}1\otimes a_i \otimes \cdots \otimes a_n \otimes a_0 \otimes \mathfrak{N}(a_1)\otimes \cdots \otimes \mathfrak{N}(a_{i-1})
\right],
\]
as defined in \cite{Kowalzig}, and where the map $\partial : H_\bullet(A, A_\mathfrak{N})^* \to HH^\bullet(A)$ is the isomorphism with inverse $\partial^{-1}([f]) = (-1)^{|f|}\langle-, f\rangle$. 
We consider the BV-operator on $H^\bullet(A,A_\mathfrak{N}^*)$ defined as $\Delta := Z_*\partial B_\mathfrak{N}^*\overline{\mathfrak{C}}$ with $Z_* : HH^\bullet(A) \to H^\bullet(A,A_\mathfrak{N}^*)$ the morphism induced by composition with $Z$ and where $\overline{\mathfrak{C}}([f]) := (-1)^{|f|}\mathfrak{C}([f])$ is the isomorphism defined in Section \ref{section:connes}. 
The situation is summarized in the diagram below:
\[
\begin{tikzcd}
H_n(A,A_\mathfrak{N})^* \arrow[d, "\partial", swap]\arrow[r, "B_\mathfrak{N}^*"] & H_{n-1}(A,A_\mathfrak{N})^* \arrow[d, "\partial"]\\
HH^n(A) \arrow[d, "Z_*", swap] \arrow[r, "\partial B_\mathfrak{N}^*\partial^{-1}", swap] & HH^{n-1}(A)\arrow[d, "Z_*"]\\
H^n(A, A_\mathfrak{N}^*) \arrow[uu, bend left=45, "\overline{\mathfrak{C}}"] \arrow[r, "\Delta", swap] & H^{n-1}(A, A_\mathfrak{N}^*).
\end{tikzcd}
\]
We shall now show that $B_\mathfrak{N}^*\overline{\mathfrak{C}}Z_* = B_\mathfrak{N}^*\partial^{-1}$. 
On one hand, we have
\begin{align*}
(&B_\mathfrak{N}^*\overline{\mathfrak{C}}Z_*)([f])([a_0\otimes a_1 \otimes \cdots \otimes a_{n-1}])\\
&= \overline{\mathfrak{C}}(Z_* f)(B_\mathfrak{N}(a_0\otimes a_1 \otimes \cdots \otimes a_{n-1}))\\
&= (-1)^n\!\sum_{i=0}^{n-1}\!(-1)^{i(n-1)}\!(Z_*f)(a_i \otimes \cdots \otimes a_n \otimes a_0 \otimes \mathfrak{N}(a_1)\otimes \cdots \otimes \mathfrak{N}(a_{i-1}))(1)\\
&= \sum_{i=0}^{n-1}(-1)^{i(n-1)n}\langle f(a_i \otimes \cdots \otimes a_n \otimes a_0 \otimes \mathfrak{N}(a_1)\otimes \cdots \otimes \mathfrak{N}(a_{i-1})), 1\rangle
\end{align*}
and on the other,
\begin{align*}
(B_\mathfrak{N}^*&\partial^{-1})([f])([a_0\otimes a_1 \otimes \cdots \otimes a_{n-1}])\\
&= (-1)^n\langle -, f \rangle(B_\mathfrak{N}(a_0\otimes a_1 \otimes \cdots \otimes a_{n-1}))\\
&= \sum_{i=1}^{n-1}(-1)^{i(n-1)n}\langle 1, f(a_i \otimes \cdots \otimes a_n \otimes a_0 \otimes \mathfrak{N}(a_1)\otimes \cdots \otimes \mathfrak{N}(a_{i-1}))\rangle\\
&= \sum_{i=1}^{n-1}(-1)^{i(n-1)n}\langle f(a_i \otimes \cdots \otimes a_n \otimes a_0 \otimes \mathfrak{N}(a_1)\otimes \cdots \otimes \mathfrak{N}(a_{i-1})), \mathfrak{N}(1)\rangle
\end{align*}
with $\mathfrak{N}(1) = 1$. 
We directly obtain that $\Delta Z_* = Z_* \partial B_\mathfrak{N}^* \partial^{-1}$ and that $\Delta$ is indeed a BV-operator. 
In particular, $\Delta^2 = 0$. 
Following similar computations as in the proof of the previous corollary, we can show that there is an isomorphism of BV-algebra $HH^\bullet(A) \iso H^\bullet(A,A_\mathfrak{N}^*)$. 
\end{proof}

Perhaps surprisingly, we are able to give an explicit formula for the BV-operator $\Delta$. 
This is, as far as we are aware, not the case for the BV-operator $\partial B_\mathfrak{N}^* \partial^{-1}$ on $HH^\bullet(A)$, since $\partial$ is hard to describe, as mentioned in Remark 3.1 of \cite{Lambre}. 
Define, analogous to $\bar{B}$, the map $\overline{B}_\mathfrak{N}: H^{n}(A,A_{\mathfrak{N}}^*) \to H^{n-1}(A,A_{\mathfrak{N}}^*)$ given by 
\begin{align*}
\overline{B}_{\mathfrak{N}}(&[f])(a_1 \otimes \cdots \otimes a_{n-1})(a_0) :=\\
& \sum_{i=0}^{n-1} (-1)^{i(n-1)} f(a_i \otimes \cdots \otimes a_{n-1} \otimes a_0 \otimes \mathfrak{N}(a_1)\otimes \cdots \otimes \mathfrak{N}(a_{i-1})) (1). 
\end{align*}
We obtain the following result. 

\begin{Corollary}
The BV-operator on $H^\bullet(A, A_\mathfrak{N}^*)$ is 
$\Delta = \overline{B}_\mathfrak{N}$. 
\end{Corollary}

\begin{proof}
First, we see that 
\begin{align*}
\Delta([&f])(a_1\otimes \cdots \otimes a_{n-1})(a_0)\\
&= Z_*\partial B_\mathfrak{N}^*\overline{\mathfrak{C}}([f])(a_1\otimes \cdots \otimes a_{n-1})(a_0)\\
&= \langle \partial B_\mathfrak{N}^*\overline{\mathfrak{C}}([f])(a_1\otimes \cdots \otimes a_{n-1}), a_0 \rangle\\
&= (-1)^n\langle a_0, \mathfrak{N}(\partial B_{\mathfrak{N}}^*\mathfrak{C}([f])(a_1\otimes \cdots \otimes a_{n-1}))\rangle.
\end{align*}
Since Su\'arez-\'Alvarez showed in \cite{SuarezAlvarez} that the Nakayama automorphism acts trivially on the Hochschild cohomology of $A$, we get the equality 
\[\mathfrak{N}(\partial B_{\mathfrak{N}}^*\mathfrak{C}([f])(a_1\otimes \cdots \otimes a_{n-1})) = \partial B_{\mathfrak{N}}^*\mathfrak{C}([f])(a_1\otimes \cdots \otimes a_{n-1}).
\]
It follows that
\begin{align*}
\Delta([&f])(a_1\otimes \cdots \otimes a_{n-1})(a_0)\\
&= (-1)^n\langle a_0, \partial B_{\mathfrak{N}}^*\mathfrak{C}([f])(a_1\otimes \cdots \otimes a_{n-1})\rangle\\
&= \partial^{-1}(\partial B_\mathfrak{N}^*\mathfrak{C}([f]))(a_0\otimes a_1 \otimes \cdots \otimes a_{n-1})\\
&= \mathfrak{C}([f])(B_\mathfrak{N}(a_0\otimes a_1 \otimes \cdots \otimes a_{n-1}))\\
&= \overline{B}_\mathfrak{N}([f])(a_1 \otimes \cdots \otimes a_{n-1})(a_0)
\end{align*}
which proves the corollary. 
\end{proof}

Furthermore, we automatically see that $\overline{B}_{\mathrm{id}} = \bar{B}$ and that Corollary \ref{cor:symm} indeed follows from the previous two results. 

Using a result of Volkov \cite{Volkov}, the hypothesis on $\mathfrak{N}$ in Theorem \ref{thm:Frobenius} can be changed. 

\begin{Corollary}
Let $A$ be a Frobenius algebra with Nakayama automorphism $\mathfrak{N}$. 
If $k$ is algebraically closed, the order of $\mathfrak{N}$ is finite, and the characteristic of $k$ does not divide the order of $\mathfrak{N}$, then the BV-algebras $HH^\bullet(A)$ and $H^\bullet(A, A_\mathfrak{N}^*)$ are isomorphic. 
\end{Corollary}

\begin{proof}
By Corollary 3 of \cite{Volkov}, $HH^\bullet(A)$ is a BV-algebra with BV-operator $\Delta : HH^\bullet(A) \to HH^{\bullet-1}(A)$ defined by the relation
\begin{align*}
\langle \Delta f&(a_1 \otimes \cdots \otimes a_{n-1}), a_0\rangle = \\ 
&\sum_{i=0}^{n-1}(-1)^{i(n-1)}\langle f(a_i \otimes \cdots \otimes a_{n-1} \otimes a_0 \otimes \mathfrak{N}(a_1)\otimes \cdots \otimes \mathfrak{N}(a_{i-1})), 1\rangle.
\end{align*}
Denoting again by $Z_*$ the isomorphism $HH^\bullet(A) \to H^\bullet(A, A_\mathfrak{N}^*)$ induced by composition with $Z$, we get that the following diagram is commutative
 \[
	\begin{tikzpicture}
         	 \matrix (m) [matrix of math nodes,row sep=2em,column sep=2em]
          	{
           	  	HH^n(A) & HH^{n-1}(A)\\
           	  	H^n(A,A^*) & H^{n-1}(A,A^*).\\
           	 };
          	\path[-stealth]
           	 (m-1-1) edge node [above] {$\Delta$} (m-1-2)
           	 (m-2-1) edge node [above] {$\overline{B}_\mathfrak{N}$} (m-2-2)
            
           	 (m-1-1) edge node [left] {$Z_*$} (m-2-1)
           	 (m-1-2) edge node [right] {$Z_*$} (m-2-2);        
	\end{tikzpicture}
	\]
	Indeed, similarly as in the proof of Corollary \ref{cor:symm},
\begin{align*}
 (&\overline{B}_\mathfrak{N} \circ Z_*)([f]) (a_1 \otimes \cdots \otimes a_{n-1})(a_0) \\ 
&= \sum_{i=0}^{n-1}(-1)^{i(n-1)}\langle f(a_i \otimes \cdots \otimes a_{n-1} \otimes a_0 \otimes \mathfrak{N}(a_1)\otimes \cdots \otimes \mathfrak{N}(a_{i-1})), 1\rangle\\
&= \langle \Delta f(a_1 \otimes \cdots \otimes a_{n-1}), a_0\rangle\\
  &=  (Z_* \circ \Delta)([f]) (a_1 \otimes \cdots \otimes a_{n-1})(a_0).
\end{align*}
Again, like in Corollary \ref{cor:symm}, we can show the isomorphism of the cup product and the bracket.
\end{proof}

The authors of \cite{Lambre} and \cite{Volkov} both used Connes' differential to define their BV-operators. 
But, at first glance, it's not obvious that they are equal.

\begin{Remark}
The BV-operator $\partial B_\mathfrak{N}^*\partial^{-1}$ of \cite{Lambre} and the BV-operator $\Delta$ of \cite{Volkov} are equal. 
Indeed, we just showed that
\[
\partial B_\mathfrak{N}^*\partial^{-1} = Z_*^{-1}\overline{B}_\mathfrak{N}Z_* = \Delta.
\]
\end{Remark}

\section{Monomial path algebras}

Let $Q$ be a finite quiver with $n$ vertices and consider a \textit{monomial path algebra} $A=kQ/\langle T\rangle$, that is, $T$ is a subset of paths in $Q$ of length greater than or equal to 2. 
We do not require the algebra $A$ to be finite dimensional. 
We write $s(\omega)$ and $t(\omega)$ for the source and the target of $\omega$ and we denote by $e_1, \ldots, e_n$ the idempotents of $A$ given by the vertices of $Q$. 
A basis $P$ of $A$ is given by the set of paths of $Q$ which do not contain paths of $T$. 
Let $P^\vee$ be the dual basis of $P$, and for $\omega \in P$ we denote $\omega^\vee$ its dual. 
Let $\alpha \in P$ and define $\omega_{/ \alpha}$ as the subpath of $\omega$ that starts in $s(\omega)$ and ends in $s(\alpha)$ if $\alpha$ is a subpath of $\omega$ such that $t(\alpha)=t(\omega)$, and zero otherwise. 
Let $\beta \in P$ and define $_{\beta \bs }\omega$ as the subpath of $\omega$ that starts at $t(\beta)$ and ends in $t(\omega)$ if $\beta$ is a subpath of $\omega$ such that $s(\beta)=s(\omega)$, and zero otherwise. 
In particular, $_{e_i \bs}e_{i/e_{i}} = e_i$, $\omega_{/\omega} = e_{s(\omega)}$, and $_{\omega\bs}\omega = e_{t(\omega)}$. 
The canonical $A$-bimodule structure of $A^*$ is isomorphic to the one given by linearly extending the following action
\[
         \alpha.\omega^\vee.\beta = (_{\beta \bs }\omega_{/ \alpha})^\vee.
\]
Now we construct a multiplication on $A^*$ so that it becomes an associative $k$-algebra satisfying the conditions of Proposition \ref{prp:multiplicationCup}. 
In general, we can't find a morphism $\psi$ that respects the conditions of equation \ref{eq:conditionPsi} to apply Theorem \ref{thm:BVDualPsi}. 
For $\omega, \gamma \in P$ we define
\[
    \omega^\vee \cdot \gamma^\vee = \left\{  \begin{array}{ll}
    (\gamma \omega)^\vee & \text{ if } \ t(\omega)=s(\gamma), \\
      0  & \text{ otherwise} \\
   \end{array} \right.
\]
and extend by linearity. Observe that $\gamma \ _{\beta \bs }\omega = \gamma_{/ \beta} \ \omega$, then
\[
    (\omega^\vee.\beta) \cdot \gamma^\vee = (_{\beta \bs }\omega)^\vee \cdot \gamma^\vee = (\gamma \ _{\beta \bs }\omega)^\vee = (\gamma_{/ \beta} \ \omega)^\vee = \omega^\vee \cdot (\gamma_{/ \beta})^\vee = \omega^\vee \cdot (\beta.\gamma^\vee).
\]
Also,
\[
    \alpha . (\omega^\vee \cdot \gamma^\vee) . \beta = \alpha.(\gamma \omega)^\vee.\beta = (_{\beta \bs }\gamma \omega_{/ \alpha})^\vee = (\omega_{/ \alpha})^\vee \cdot (_{\beta \bs }\gamma)^\vee = (\alpha.\omega^\vee)\cdot (\gamma^\vee.\beta).
\]
The multiplication is associative since the product of $A$, i.e. the composition of paths, is associative. 
Therefore, Proposition \ref{prp:multiplicationCup} applies and we get a direct corollary of Theorem \ref{thm:BVdual}. 

\begin{Corollary}
    Let $A$ be a monomial path algebra. 
    Then $H^\bullet(A,A^*)$ is a BV-algebra.
\end{Corollary}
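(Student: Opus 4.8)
The plan is to deduce the statement as a direct instance of Theorem 3.1. That theorem upgrades any $A$-structural map $\psi:A^*\otimes_A A^*\to A^*$ over an associative unital $k$-algebra $A$ into the BV-algebra $\left(H^\bullet_\psi(A,A^*),\cup_\psi,[\ ,\ ]_\psi,\bar{B}\right)$. Since a monomial path algebra $A=kQ/\langle T\rangle$ is associative and unital, with unit the sum of the vertex idempotents, it suffices to produce such a $\psi$ and to check the four conditions of Definition 3.1: that it descends to the relative tensor product $\otimes_A$, that it is a morphism of $A$-bimodules, that it is associative, and that it is unital.

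First I would describe the $A$-bimodule structure of $A^*$ combinatorially. Taking the basis $P$ of paths avoiding $T$ together with its dual basis $P^\vee$, the canonical action $(afb)(x)=f(bxa)$ becomes the truncation rule $\alpha.\omega^\vee.\beta=({}_{\beta\bs}\omega_{/\alpha})^\vee$. I would then define the candidate product on dual basis elements by concatenation, setting $\omega^\vee\cdot\gamma^\vee=(\gamma\omega)^\vee$ when $t(\omega)=s(\gamma)$ and $0$ otherwise, and extend this $k$-linearly to all of $A^*\otimes_A A^*$.

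The step I expect to be the genuine obstacle is well-definedness over $\otimes_A$, that is, the equality $(\omega^\vee.\beta)\cdot\gamma^\vee=\omega^\vee\cdot(\beta.\gamma^\vee)$ for all basis paths. This rests on the identity $\gamma\,{}_{\beta\bs}\omega=\gamma_{/\beta}\,\omega$ relating the two truncation operations, and it is precisely here that the monomial hypothesis enters: the distinguished basis is stable under passing to subpaths, so the two ways of grafting $\beta$ yield the same element of $P$. Granting this, the $A$-bimodule morphism property follows from a parallel truncation computation, associativity is inherited from the associativity of concatenation (the product of $A$), and unitality is verified against $1^*=e_1^\vee+\cdots+e_n^\vee\in H^0(A,A^*)$, using that $e_{t(\omega)}^\vee\cdot\omega^\vee=\omega^\vee$ while the other vertex terms vanish. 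Once $\psi$ is confirmed to be an $A$-structural map, I would invoke Theorem 3.1 to conclude that $H^\bullet_\psi(A,A^*)$ carries the asserted BV-algebra structure.
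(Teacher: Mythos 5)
Your proposal is correct and follows essentially the same route as the paper: you construct the same concatenation product $\omega^\vee\cdot\gamma^\vee=(\gamma\omega)^\vee$ on the dual path basis, verify well-definedness over $\otimes_A$ via the same identity $\gamma\,{}_{\beta\bs}\omega=\gamma_{/\beta}\,\omega$ (correctly identifying this, together with the stability of the monomial basis under subpaths, as the crux), check the bimodule, associativity and unitality conditions with the same unit $1^*=e_1^\vee+\cdots+e_n^\vee$, and conclude by Theorem 3.1. No gaps; this matches the paper's argument in both structure and substance.
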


In general, for a monomial path algebra $A = kQ/\langle T \rangle$, the Hochschild cohomology group $HH^\bullet(A)$ is not a BV-algebra, if one defines the bracket as in \cite{Gerstenhaber} and discussed above. 
For instance, if $Q$ is the quiver
\[
\begin{tikzcd}[sep=small]
    & & 3\arrow[dr, "\alpha_3"] \\
    & 2 \arrow[ur, "\alpha_2"] \arrow[rr, "\beta", swap] & & 4\arrow[dr, "\alpha_4"]\\
    1 \arrow[ur, "\alpha_1"] \arrow[rrrr, "\gamma", swap] & & & & 5
\end{tikzcd}
\]
and $T$ is formed by all paths of length 2, then the algebra $A$ is a radical square zero algebra and $HH^\bullet (A)$ is not a BV-algebra. 
Indeed, from Example 3.3 in \cite{Bustamante}, we know that the Gerstanhaber bracket $[\, , \, ]$ on $HH^\bullet (A)$ is not trivial, but the cup product is (in degree at least one), by Theorem 3.1 of \cite{Bustamante}. 
However, if $HH^\bullet(A)$ admitted a BV-operator, then by the relation of equation \ref{eq:BVOp}, the bracket would be trivial, which would give an obvious contradiction. 

\begin{Remark}
If $A$ is a Frobenius algebra, then it is possible that $HH^\bullet(A)$ admits a BV-structure, as seen earlier, but that could depend on the characteristic of the base field (see \cite{Lambre}, \cite{Volkov}), which is not the case for $H^\bullet(A,A^*)$. 
\end{Remark}

%\subsection{Gorenstein monomial algebras}

\subsection*{Acknowledgements}
The first author would like to thank Claude Cibils and Ricardo Campos for useful discussions at IMAG, Universit\'e de Montpellier, during the Rencontre 2018 du GdR de Topologie Alg\'ebrique. The first author received funds from CIMAT, CNRS, CONACyT and EDUCAFIN. 

\bibliographystyle{amsplain}
\bibliography{main}

\end{document}